\documentclass[11pt,twoside]{amsart}
\usepackage{amsmath, amsthm, amscd, amsfonts, amssymb, graphicx, color}
\usepackage[bookmarksnumbered, colorlinks, plainpages]{hyperref}
\usepackage{pgf,tikz}
\usetikzlibrary{arrows}
\usepackage[utf8]{inputenc}
\usepackage{pstricks-add}
\input{mathrsfs.sty}
\textwidth 12 cm \textheight 18 cm

\oddsidemargin 2.12cm \evensidemargin 1.8cm

\setcounter{page}{1}


\newtheorem{thm}{Theorem}[section]

\newtheorem{lem}[thm]{Lemma}
\newtheorem{prop}[thm]{Proposition}

\newtheorem{rem}[thm]{\bf{Remark}}
\newtheorem{example}[thm]{\bf{Example}}

\numberwithin{equation}{section}


\begin{document}


\title{Unmixed $r$-partite Graphs }
\author{Reza jafarpour-Golzari and Rashid Zaare-Nahandi}
\address{Department of Mathematics, Institute for Advanced Studies
in Basic Science (IASBS), P.O.Box 45195-1159, Zanjan, Iran}

\email{r.golzary@iasbs.ac.ir}

\address{Department of Mathematics, Institute for Advanced Studies
in Basic Science (IASBS), P.O.Box 45195-1159, Zanjan, Iran}

\email{rashidzn@iasbs.ac.ir}

\thanks{{\scriptsize
\hskip -0.4 true cm MSC(2010): Primary: 5E40; Secondary: 5C69, 5C75.
\newline Keywords: $r$-partite graph, well-covered, unmixed,
perfect matching, clique.\\
\\
\newline\indent{\scriptsize}}}

\maketitle


\begin{abstract}
Unmixed bipartite graphs have been characterized by Ravadra and
Villarreal independently. Our aim in this paper is to
characterize unmixed  $r$-partite graphs under a certain condition,
witch is a generalization of villarreal's theorem on bipartite
graphs. Also we give some examples and counterexamples in relevance this subject.
\end{abstract}

\vskip 0.2 true cm


\section{\bf Introduction}
\vskip 0.4 true cm

In the sequel, we use \cite{EV} as reference for terminology and
notation on graph theory.

Let $G$ be a simple finite graph with vertex set $V(G)$ and edge set $E(G)$.
 A subset $C$  of $V(G)$ is said to be a vertex cover of G if every
edge of $G$, is adjacent with some vertices in $C$. A vertex cover
$C$ is called minimal, if there is no proper subset of $C$ which is a vertex cover. A graph is called unmixed, if all minimal
vertex covers of $G$ have the same number of elements. A subset $H$
of $V(G)$ is said to be independent, if $G$ has not
any edge $\{x, y\}$ such that $\{x, y\}\subseteq H$. A maximal
independent set of $G$, is an independent set $I$ of $G$, such
that for every $H\supsetneqq I$, $H$ is not an independent set of
$G$. Notice that $C$ is a minimal vertex cover if and only if
$V(G)\setminus C$ is a maximal independent set. A graph $G$ is called
well-covered if all the maximal independent sets of $G$ have the
same cardinality. Therefore a graph is unmixed if and only if it
is well-covered. The minimum cardinality of all minimal vertex
covers of $G$ is called the covering number of $G$, and the
maximum cardinality of all maximal independent sets of G is
called the independence number of $G$. For determining the
independence number see \cite{K}. For relation between unmixedness of a graph and other graph properties see \cite{EE, HH, V, Z}.

Well-covered graphs were introduced by Plummer.
See \cite{P} for a survey on well-covered graphs and properties
of them. For an integer $r\geq 2$, a graph $G$ is said to
be $r$-partite, if $V(G)$ can be partitioned into $r$
disjoint parts such that for every $\{x, y\}\in E(G)$, $x$ and
$y$ do not lie in the same part. If $r=2, 3$, $G$ is
said to be bipartite and tripartite, respectively. Let G be an $r$-partite graph. For a vertex $v\in V(G)$, let $N(v)$ be the set of all vertices $u\in
V(G)$ where $\{u, v\}$ be an edge of $G$. Let $G$ be a bipartite
graph, and let $e=\{u, v\}$ be an edge of $G$. Then $G_{e}$ is the
subgraph induced on $N(u)\cup N(v)$. If $G$ is connected, the distance between $x$ and $y$ where $x,
y\in V(G)$, denoted by $d(x, y)$, is the length of the shortest
path between $x$ and $y$. A set $M\subseteq E(G)$ is said to be a
matching of $G$, if for any two $\{x, y\}, \{x', y'\}\in M$,
$\{x, y\}\cap \{x', y'\}= \emptyset$. A matching $M$ of $G$ is called
perfect if for every $v\in V(G)$, there exists an edge $\{x,
y\}\in M$ such that $v\in \{x, y\}$. A clique in G is a set Q of
vertices such that for every $x, y\in Q$, if $x\neq y$, $x, y$
lie in an edge. An $r$-clique is a clique of size r.

Unmixed bipartite graphs have already been characterized by
Ravindra and villarreal in a combinatorial way independently
\cite{R, V2}. Also these graphs have been characterize in an
algebraic method \cite{V1}.

\smallskip

In 1977, Ravindra gave the following criteria for unmixedness of
bipartite graphs.

\smallskip

\begin{thm}
\cite{R} Let $G$ be a connected bipartite graph. Then $G$ is
unmixed if and only if $G$ contains a perfect matching $F$ such
that for every edge $e=\{x, y\}\in F$, the induced subgraph
$G_{e}$ is a complete bipartite graph.
\end{thm}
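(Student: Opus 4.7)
The plan is to prove both directions using the interplay between perfect matchings and maximal independent sets, specifically a pigeonhole principle applied to the pairs matched by $F$.

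For the easier reverse implication, I would first note that since $F$ is a perfect matching of size $n$, any independent set in $G$ meets each edge of $F$ in at most one vertex, so independent sets have size at most $n$. To show every maximal independent set $I$ attains this bound, suppose $|I| < n$. Pigeonhole forces some $e = \{x, y\} \in F$ to satisfy $I \cap \{x, y\} = \emptyset$; maximality of $I$ then provides $v \in N(x) \cap I$ and $u \in N(y) \cap I$. Since $u$ and $v$ lie in opposite parts of the bipartition they are distinct, and the hypothesis that $G_e$ is complete bipartite forces $\{u, v\} \in E(G)$, contradicting the independence of $I$. Hence all maximal independent sets have the common size $n$.

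For the forward implication, let $G$ be connected, unmixed, and bipartite with parts $X$ and $Y$. Because $G$ is connected and nontrivial, no vertex is isolated, so both $X$ and $Y$ are maximal independent sets; unmixedness yields $|X| = |Y| =: n$, and K\"onig's theorem then produces a perfect matching $F$ of size $n$. I would claim that any such $F$ already has the required property. If some $e = \{x, y\} \in F$ fails it, there exist $u \in N(y) \setminus \{x\}$ and $v \in N(x) \setminus \{y\}$ with $\{u, v\} \notin E(G)$; the independent pair $\{u, v\}$ extends to a maximal independent set $I$, which automatically excludes $x$ and $y$ because their respective neighbors $v, u$ already lie in $I$. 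Then $I$ contains no vertex of $e$ and at most one vertex from each remaining edge of $F$, so $|I| \leq n - 1$, contradicting unmixedness.

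The step I expect to be trickiest to present cleanly is the pigeonhole argument on the matching, which appears in both directions but in opposite roles: in the reverse direction it extracts a missed matching edge from a too-small independent set, while in the forward direction it converts a missed matching edge back into a too-small independent set. Once this symmetry is recognized the proof essentially writes itself; the only subtlety is being sure that the connectedness hypothesis is deployed only where needed, namely to secure equal bipartition sizes and hence the existence of a perfect matching in the forward direction, and that the maximality (not merely independence) of $I$ is used in both directions to produce the forbidden non-edge.
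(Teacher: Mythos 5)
The paper records this theorem as Ravindra's result and cites \cite{R} without reproducing a proof, so there is no internal argument to compare against; I am therefore assessing your proposal on its own terms. The overall architecture is right: in the reverse direction, an independent set meets each edge of the perfect matching at most once, a maximal independent set of size less than $n$ must miss some matched pair $\{x,y\}$, and the two neighbours $u\in N(y)\cap I$, $v\in N(x)\cap I$ forced by maximality lie on opposite sides of $G_e$ and hence are adjacent because $G_e$ is complete bipartite --- contradiction. In the forward direction, a non-edge between $N(x)$ and $N(y)$ extends to a maximal independent set avoiding both $x$ and $y$, hence of size at most $n-1$, contradicting well-coveredness since $X$ itself is a maximal independent set of size $n$. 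Both of these are correct.

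The one genuine gap is the step that produces the perfect matching. You assert that unmixedness yields $|X|=|Y|=n$ and that ``K\"onig's theorem then produces a perfect matching,'' and you reiterate at the end that connectedness is used ``to secure equal bipartition sizes and hence the existence of a perfect matching.'' Equal part sizes in a connected bipartite graph do \emph{not} imply a perfect matching: take parts $\{x_1,x_2,x_3\}$ and $\{y_1,y_2,y_3\}$ with $x_1$ adjacent to $y_1,y_2,y_3$ and $x_2,x_3$ adjacent only to $y_1$; this is connected with equal parts, yet Hall's condition fails for $\{x_2,x_3\}$. What K\"onig's theorem needs is the minimum vertex cover number, and that is where unmixedness must be invoked a second time: since $Y$ and $X$ are maximal independent sets, their complements $X$ and $Y$ are \emph{minimal} vertex covers, so unmixedness forces every minimal vertex cover --- in particular any minimum one --- to have size $n$; K\"onig then yields a matching of size $n$, which saturates all $2n$ vertices and is therefore perfect. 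With this one-line repair (and the tacit assumption, which you already flag, that $G$ has at least one edge) the proof is complete.
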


\smallskip

Villarreal in 2007, gave the following characterization of
unmixed bipartite graphs.

\smallskip

\begin{thm}
\cite[Theorem 1.1]{V2} Let $G$ be a bipartite graph without
isolated vertices. Then $G$ is unmixed if and only if there is a
bipartition $V_{1}=\{x_{1}, \ldots , x_{g}\}, V_{2}=\{y_{1},
\ldots , y_{g}\}$ of $G$ such that: (a) $\{x_{i}, y_{i}\}\in
E(G)$, for all i, and (b) if $\{x_{i}, y_{j}\}$ and $\{x_{j},
y_{k}\}$ are in $E(G)$, and $i, j, k$ are distinct, then
$\{x_{i}, y_{k}\}\in E(G)$.
\end{thm}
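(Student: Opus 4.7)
The statement is an equivalence, so I will handle the two directions separately. For the forward implication I will lean on Ravindra's criterion (Theorem 1.1 above), while for the converse I will argue directly from conditions (a) and (b).

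\emph{Only if.} Assume $G$ is unmixed. Because $G$ has no isolated vertices, each of the bipartition classes $V_1, V_2$ is itself a maximal independent set: it is independent by bipartiteness, and any vertex from the opposite side has at least one neighbor in it, so cannot be added. Unmixedness then forces $|V_1|=|V_2|=:g$, and combining the identity $\alpha(G)+\tau(G)=|V(G)|$ with K\"onig's equality $\tau(G)=\nu(G)$ for bipartite graphs produces a matching of size $g$, which is therefore perfect. Index the vertices so that the matching edges are $\{x_i,y_i\}$; this is exactly (a). For (b), apply Ravindra's theorem to the matching edge $e=\{x_j,y_j\}$: the induced subgraph $G_e$ on $N(x_j)\cup N(y_j)$ is complete bipartite. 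The hypotheses $\{x_i,y_j\},\{x_j,y_k\}\in E(G)$ with $i,j,k$ distinct give $x_i\in N(y_j)$ and $y_k\in N(x_j)$, so completeness of $G_e$ forces $\{x_i,y_k\}\in E(G)$, as required.

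\emph{If.} Assume the bipartition satisfies (a) and (b) and let $I$ be any maximal independent set. Condition (a) implies that $I$ contains at most one vertex from each pair $\{x_i,y_i\}$, so $|I|\le g$. Suppose toward a contradiction that some pair has neither vertex in $I$; that is, $x_i,y_i\notin I$ for some $i$. Maximality of $I$ then provides $y_j\in I$ with $\{x_i,y_j\}\in E(G)$ and $x_k\in I$ with $\{x_k,y_i\}\in E(G)$. The three indices are pairwise distinct: $j=i$ or $k=i$ contradicts $x_i,y_i\notin I$, while $j=k$ would put both endpoints of the edge $\{x_j,y_j\}$ (which exists by (a)) into $I$. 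Applying (b) with the triple $(k,i,j)$ to the edges $\{x_k,y_i\}$ and $\{x_i,y_j\}$ yields $\{x_k,y_j\}\in E(G)$, contradicting the independence of $I$ since $x_k,y_j\in I$. Hence every maximal independent set has exactly $g$ vertices and $G$ is unmixed.

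The main obstacle I anticipate is not computational but organizational: condition (b) has a directional shape in which the shared index $j$ plays the ``middle'' role, and one must carefully verify pairwise distinctness of the three indices before invoking it. Everything else---the perfect matching from K\"onig's theorem, the pair-by-pair bookkeeping in the converse---is routine once Ravindra's theorem is available.
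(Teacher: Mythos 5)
The paper does not actually prove this statement: it is quoted from Villarreal \cite[Theorem 1.1]{V2}, and the paper's own contribution is to recover it (Remark 2.4) as the $r=2$ case of Theorem 2.3. Your ``if'' direction is correct and is essentially the same pairwise bookkeeping the paper uses to prove Theorem 2.3: each pair $\{x_i,y_i\}$ meets a maximal independent set in at most one vertex, and the case where it meets it in zero vertices is excluded by producing $y_j, x_k\in I$ via maximality, checking $i,j,k$ are pairwise distinct, and invoking (b). That part stands.

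The ``only if'' direction has a genuine gap. Ravindra's theorem, as stated, asserts the \emph{existence} of a perfect matching $F$ such that $G_e$ is complete bipartite for every $e\in F$; it does not assert this for every perfect matching. You construct a matching of size $g$ by combining Gallai's identity with K\"onig's theorem, index the vertices by \emph{that} matching, and then apply Ravindra's conclusion to its edges $\{x_j,y_j\}$ --- but nothing guarantees that your K\"onig matching is the one Ravindra's theorem supplies. (The stronger statement that \emph{every} perfect matching of an unmixed bipartite graph has this property happens to be true, but it requires an argument you have not given.) The fix is to discard the K\"onig detour entirely: apply Ravindra's theorem directly and index by the matching it produces. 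A second, smaller issue is that Ravindra's theorem is stated for \emph{connected} bipartite graphs, while the theorem you are proving assumes only that there are no isolated vertices; you must first observe that every connected component of a well-covered graph is well-covered, apply Ravindra component by component, and take the union of the resulting matchings. Both repairs are routine, but as written the forward implication does not go through.
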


\smallskip

H. Haghighi in \cite{Ha} gives the following characterization
of unmixed tripartite graphs under certain conditions.

\smallskip

\begin{thm}

\cite[Theorem 3.2]{Ha} Let $G$ be a tripartite graph which satisfies the condition $(\ast )$. Then the graph $G$ is unmixed
if and only if the following conditions hold:

(1) If $\{u_{i}, x_{q}\}, \{v_{j}, y_{q}\}, \{w_{k}, z_{q}\}\in E(G)$, where no two vertices of $\{x_{q}, y_{q}, z_{q}\}$ lie in one of the tree parts of $V(G)$ and $i, j, k, q$ are distinct, then the set $\{u_{i}, v_{j}, w_{k}\}$ contains an edge of $G$.

(2) If $\{r, x_{q}\}, \{s, y_{q}\}, \{t, z_{q}\}$ are edges of $G$, where $r$ and $S$ belong to one of the three parts of $V(G)$ and $t$ belongs to another part, then the set $\{r, s, t\}$ contains an edge of $G$(here $r$ and $s$ may be equal).

\end{thm}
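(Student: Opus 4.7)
The plan is to prove the equivalence by adapting Villarreal's bipartite argument (Theorem 1.2) to the tripartite setting. Condition $(\ast)$ is the key structural hypothesis and, by analogy with the perfect-matching indexing of Villarreal, I would read it as supplying a canonical indexing of $V(G)$ into $g$ ``triples'' $\{x_q,y_q,z_q\}$ so that there is a distinguished maximal independent set of cardinality $g$. Unmixedness is then equivalent to the assertion that \emph{every} maximal independent set has cardinality exactly $g$, and the game is to compare arbitrary maximal independent sets against the canonical one by means of (1) and (2).

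For necessity, assume $G$ is unmixed. To obtain (1), suppose the hypothesis of (1) holds but $\{u_i,v_j,w_k\}$ is independent. Then $\{u_i,v_j,w_k\}$ is an independent set meeting three distinct triples, and the edges $\{u_i,x_q\},\{v_j,y_q\},\{w_k,z_q\}$ block exactly the three vertices of the triple indexed by $q$. I would extend $\{u_i,v_j,w_k\}$ to a maximal independent set $I$ and compare $|I|$ with $g$; a counting argument using $(\ast)$ shows that $I$ still has ``room'' to include a representative from the $q$-th triple via a carefully chosen replacement, producing a cardinality mismatch and contradicting unmixedness. An entirely parallel argument, in which $r$ and $s$ lie in the same part (and may coincide), yields (2): here $\{r,s,t\}$ spans only two triples rather than three, so the room for an extra representative is even easier to exhibit.

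For sufficiency, assume (1) and (2) and let $I$ be any maximal independent set. The goal is to show $|I|=g$ by verifying that $I$ meets each triple $\{x_q,y_q,z_q\}$ in exactly one vertex. The ``at most one'' side is handled by maximality together with the intra-triple edges forced by $(\ast)$. For the ``at least one'' side I would argue by contradiction: if $I$ avoids some triple entirely, then maximality of $I$ produces edges from each of $x_q,y_q,z_q$ to vertices of $I$, and a case split on how these three neighbours are distributed among the three parts of $V(G)$ places us in the hypothesis of either (1) or (2). Either way, the conclusion yields an edge inside $I$, a contradiction.

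The principal obstacle is this final case analysis. Every possible placement of the three neighbours across the three parts has to land in the hypothesis of exactly one of (1) or (2), and the clause ``$r$ and $s$ may be equal'' in (2) is precisely what is needed to cover the boundary cases where two of the neighbours collapse onto a single vertex. A careful bookkeeping of part-memberships, combined with the indexing supplied by $(\ast)$, is what makes the pair (1)+(2) exactly the right hypotheses to close the sufficiency direction, and this is where most of the real work of the proof lies.
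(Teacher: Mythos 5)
Your strategy is essentially the engine the paper itself relies on, just packaged differently: the paper never proves Theorem 1.3 directly, but cites it and observes in Remark 2.4 that it is the case $r=3$ of Theorem 2.3, whose proof is your argument phrased dually --- in terms of minimal vertex covers rather than maximal independent sets --- and with conditions (1) and (2) fused into a single hypothesis (if $a\sim x_{q}$, $b\sim y_{q}$, $c\sim z_{q}$, then $\{a,b,c\}$ is not independent). What your direct route has to pay for, and what the unified formulation sidesteps, is exactly the exhaustiveness of your final case split, which you flag as ``the real work'' but do not carry out. Two points there need to be made explicit before the plan is a proof.

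First, in the necessity direction the mechanism is simpler than your ``carefully chosen replacement.'' Once $\{u_{i},v_{j},w_{k}\}$ is extended to a maximal independent set $I$, each of $x_{q},y_{q},z_{q}$ already has a neighbour in $I$, so $I$ misses the $q$-th triangle entirely; since $(\ast)$ makes each triple a clique, $I$ meets every triple in at most one vertex and hence $|I|\le n-1$ (your $g-1$), while each part $U,V,W$ is itself a maximal independent set of size $n$. That cardinality gap is the whole contradiction; no replacement is needed. Second, in the sufficiency direction the case analysis does close, but only because of a tripartiteness observation you should record: if $I$ avoids the $q$-th triangle and $a,b,c\in I$ are neighbours of $u_{q},v_{q},w_{q}$ respectively, then $a\notin U$, $b\notin V$, $c\notin W$, so the three neighbours can never all lie in a single part. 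Consequently either they occupy all three parts --- and since two distinct neighbours lying in one triple would already be adjacent by $(\ast)$, one may assume the triple indices $i,j,k,q$ are pairwise distinct, which is hypothesis (1) --- or exactly two of them share a part (possibly coinciding as vertices), and the same membership argument shows the third automatically lies in ``another part,'' which is hypothesis (2). With those two points supplied, your argument is complete and coincides with the paper's proof of Theorem 2.3 specialized to $r=3$.
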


In the above theorem, he has considered the condition $(\ast )$ as:

being a tripartite graph with partitions
\[U=\{u_{1}, \ldots u_{n}\}, V=\{v_{1}, \ldots v_{n}\}, W=\{w_{1}, \ldots w_{n}\},\]
in which $\{u_{i}, v_{i}\}, \{u_{i}, w_{i}\}, \{v_{i}, w_{i}\}\in E(G)$, for all $i=1, \ldots , n$.

Also to simplify the notations, he has used $\{x_{i}, y_{i}, z_{i}\}$ and $\{r_{i}, s_{i}, t_{i}\}$ as two permutations of $\{u_{i}, v_{i}, w_{i} \}$.

\

We give a characterization of unmixed $r$-partite graphs under certain condition which we name it $(\ast )$(see Theorem 2.3).

\

In both theorems 2.1 and 2.2 in an unmixed connected bipartite
graph, there is a perfect matching, with cardinality equal
to the cardinality of a minimal vertex cover, i.e. $\frac{|V(G)|}{2}$. An unmixed graph with $n$ vertices such that its
independence number is $\frac{n}{2}$, is said to be very well-covered. The unmixed
connected bipartite graphs are contained in the class of very well-covered
graphs. A characterization of very well-covered graphs is given in
\cite{ET}.

\smallskip

\section{\bf {\bf \em{\bf A generalization}}}
\vskip 0.4 true cm

By the following proposition, bipartition in connected bipartite graphs is unique.

\begin{prop}
Let $G$ be a connected bipartite graph with bipartition $\{A,
B\}$, and let $\{X, Y\}$ be any bipartition of $G$. Then $\{A,
B\}=\{X, Y\}$.
\begin{proof}
Let $x\in A$ be an arbitrary vertex of $G$. Then
$x\in X$ or $x\in Y$. without loss of generality let
$x$ be in $X$. Let $a\in A$.
then $d(x,a)$ is even. Then $a$ and $x$ are in the same part (of partition
$\{X,Y\}$). Then $A\subseteq X$, and by the same argument we have $X\subseteq A$. Therefore $A=X$, and
then $\{A, B\}=\{X, Y\}$.
\end{proof}
\end{prop}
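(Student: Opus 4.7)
The plan is to exploit the parity of distances in a connected bipartite graph: in any bipartition, two vertices lie in the same part if and only if the distance between them is even. Since distance is defined purely in terms of the edge set and does not depend on which bipartition we fix, the part containing a given vertex is determined.

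First I would fix an arbitrary vertex $x \in A$ and, without loss of generality, assume $x \in X$. The key lemma to invoke is that for any two vertices $a, b$ in a connected bipartite graph with bipartition $\{A,B\}$, every walk from $a$ to $b$ alternates between $A$ and $B$, so its length is even precisely when $a$ and $b$ lie in the same part. In particular, for any $a \in A$ the distance $d(x,a)$ is even.

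Next I would apply the same parity argument to the other bipartition $\{X, Y\}$: along a shortest $x$-$a$ path, consecutive vertices lie in opposite parts of $\{X, Y\}$, so after an even number of steps we return to the part containing $x$, namely $X$. Hence $a \in X$, giving $A \subseteq X$. By exactly the same reasoning with the roles of $A$ and $X$ interchanged (starting from an arbitrary vertex of $X$ and using that $x \in A$), we obtain $X \subseteq A$. Thus $A = X$, and since $A \cup B = V(G) = X \cup Y$ is a disjoint union in both cases, $B = Y$, so $\{A,B\} = \{X,Y\}$.

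The only mildly subtle point is the parity claim, which relies on connectedness (to guarantee a walk between $x$ and $a$ exists) and on the bipartite property (to guarantee that along any walk the part alternates at every step). Neither of these is difficult, but they are the real content of the proof; the rest is bookkeeping.
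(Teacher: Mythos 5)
Your proposal is correct and follows essentially the same route as the paper's own proof: fix $x\in A$, assume without loss of generality $x\in X$, use the evenness of $d(x,a)$ together with the alternation of parts along a shortest path to get $A\subseteq X$, then argue symmetrically for the reverse inclusion. You simply spell out the parity lemma that the paper leaves implicit.
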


\smallskip

The above fact for bipartite graphs, is not true in case of tripartite graphs, as shown in the following example.

\definecolor{qqqqff}{rgb}{0.,0.,1.}
\begin{center}
\begin{tikzpicture}[line cap=round,line join=round,>=triangle 45,x=1.0cm,y=1.0cm]
\clip(0.,2.) rectangle (4.,5.);
\draw (1.8,4.44)-- (3.68,3.58);
\draw (1.8,4.44)-- (0.54,4.4);
\draw (1.8,4.44)-- (0.54,3.66);
\draw (0.54,3.66)-- (1.8,4.44);
\draw (1.8,4.44)-- (0.54,4.4);
\draw (3.68,3.58)-- (1.82,2.68);
\draw (1.82,2.68)-- (0.58,2.66);
\draw (0.36,5.02) node[anchor=north west] {$a_{1}$};
\draw (0.36,4.32) node[anchor=north west] {$a_{2}$};
\draw (0.4,3.26) node[anchor=north west] {$a_{3}$};
\draw (1.66,5.06) node[anchor=north west] {$a_{4}$};
\draw (1.64,3.28) node[anchor=north west] {$a_{5}$};
\draw (3.52,4.22) node[anchor=north west] {$a_{6}$};
\begin{scriptsize}
\draw [fill=qqqqff] (1.8,4.44) circle (1.5pt);
\draw [fill=qqqqff] (3.68,3.58) circle (1.5pt);
\draw [fill=qqqqff] (0.54,4.4) circle (1.5pt);
\draw [fill=qqqqff] (0.54,3.66) circle (1.5pt);
\draw [fill=qqqqff] (1.82,2.68) circle (1.5pt);
\draw [fill=qqqqff] (0.58,2.66) circle (1.5pt);
\end{scriptsize}
 \end{tikzpicture}

  \end{center}
 \label{graph 1}

\smallskip

In the above graph there are two different tripartitions:
  \[\{\{a_{1}, a_{2}, a_{3}\}, \{a_{4}, a_{5}\}, \{a_{6}\}\}\]
and
  \[\{\{a_{1}, a_{2}\}, \{a_{4}, a_{5}\}, \{ a_{3}, a_{6}\}\}.\]

\smallskip

A natural question refers to find criteria which
characterize a special class of unmixed $r$-partite $(r\geq 2)$
graphs.

\smallskip

In the above two characterizations of bipartite graphs, having a
perfect matching is essential in both proofs. This motivates us to impose the following condition.\\
\textit{We say a graph $G$ satisfies the condition $(\ast )$ for an integer $r\geq 2$, if $G$ can be partitioned to $r$ parts
 $V_{i}=\{x_{1i},
 \ldots , x_{ni}\}$,$(1\leq i\leq r)$, such that for all $1\leq j\leq n$, $\{x_{j1}, \ldots , x_{jr}\}$ is a clique.
 }

\smallskip

\begin{lem}
Let $G$ be a graph which satisfies $(\ast )$ for $r\geq 2$. If $G$ is unmixed,
then every minimal vertex cover of $G$, contains $(r-1)n$
vertices. Moreover the independence number of $G$ is
$n=\frac{|V(G)|}{r}$
\end{lem}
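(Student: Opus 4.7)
The plan is to exploit the double structure provided by $(\ast)$: the partition into $r$ color classes $V_1,\ldots,V_r$ gives us large independent sets for free, while the partition of $V(G)$ into $n$ vertex-disjoint $r$-cliques $Q_j = \{x_{j1},\ldots,x_{jr}\}$ gives a matching upper bound on the size of any independent set. The independence number will fall out by sandwich.

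First I would observe that each $V_i$ is an independent set of cardinality $n$. Indeed, since $\{V_1,\ldots,V_r\}$ is an $r$-partition realizing $G$ as an $r$-partite graph, no edge of $G$ has both endpoints inside a single $V_i$. In particular $V_1$ is independent and $|V_1| = n$.

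Next I would establish the upper bound: for any independent set $I \subseteq V(G)$ one has $|I| \le n$. The argument is the standard clique-cover bound. By $(\ast)$ the $n$ sets $Q_1,\ldots,Q_n$ are cliques, they are pairwise disjoint (since each $Q_j$ meets each $V_i$ in exactly one vertex, and the $V_i$ are disjoint), and their union is all of $V(G)$. Any independent set can contain at most one vertex from each clique $Q_j$, hence at most $n$ vertices in total.

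Combining these two steps, $V_1$ is an independent set of maximum possible size $n$, and therefore it is a maximal independent set. Since $G$ is unmixed, every maximal independent set has the same cardinality, namely $n$. Thus the independence number equals $n = |V(G)|/r$. Using the elementary identity $|V(G)| = |C| + |V(G)\setminus C|$ together with the fact that $C$ is a minimal vertex cover iff $V(G)\setminus C$ is a maximal independent set, we conclude that every minimal vertex cover of $G$ has exactly $rn - n = (r-1)n$ vertices. There is no real obstacle here; the only point that needs to be made carefully is that the $r$-cliques $Q_j$ from $(\ast)$ are automatically vertex-disjoint and exhaust $V(G)$, which is what makes the clique-cover bound tight.
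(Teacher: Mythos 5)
Your proposal is correct and is essentially the paper's own argument read through complementation: your clique-cover bound (an independent set meets each $Q_j$ in at most one vertex) is the dual of the paper's observation that a vertex cover must contain at least $r-1$ vertices of each $Q_j$, and your choice of $V_1$ as a maximum (hence maximal) independent set corresponds exactly to the paper's exhibition of $V_1\cup\cdots\cup V_{r-1}$ as a minimal vertex cover of size $(r-1)n$, with unmixedness/well-coveredness then forcing equality everywhere.
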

\begin{proof}
Let $C$ be a minimal vertex cover of $G$. Since for every $1\leq
j\leq n$, the vertices $x_{j1}, \ldots , x_{jr}$ are in a clique, $C$ must contain at least $r-1$ vertices in $\{x_{j1},
\ldots , x_{jr}\}$.
Therefore $C$ contains at least $(r-1)n$ vertices. By hypothesis
$\bigcup_{i=1}^{r-1} V_{i}$ is minimal vertex cover with $(r-1)n$
vertices, and $G$ is unmixed. Then every minimal vertex cover of
$G$ contains exactly $(r-1)n$ elements. The last claim can be
concluded from this fact that the complement of a minimal vertex
cover, is an independent set.
\end{proof}

Now we are ready for the main theorem.

\begin{thm}
Let $G$ be an $r$-partite graph which satisfies the
condition $(\ast )$ for $r$. Then $G$ is unmixed if and only if the
following condition hold:
\\For every $1\leq q\leq n$, if there is a set $\{x_{k_{1}s_{1}}, \ldots , x_{k_{r}s_{r}}\}$ such that
\[x_{k_{1}s_{1}}\thicksim x_{q1}, \ldots , x_{k_{r}s_{r}}\thicksim x_{qr},\]
 then the set $\{x_{k_{1}s_{1}}, \ldots , x_{k_{r}s_{r}}\}$ is not independent.
\end{thm}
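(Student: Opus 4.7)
The plan is to exploit condition $(\ast)$, which partitions $V(G)$ into $n$ pairwise disjoint $r$-cliques $C_j=\{x_{j1},\ldots,x_{jr}\}$ for $1\le j\le n$. This partition gives the a priori bound $|M|\le n$ for any independent set $M$, since $M$ can contain at most one vertex from each $C_j$. The whole argument will reduce to deciding when this bound is attained for every maximal $M$.

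For the forward direction, I assume $G$ is unmixed and argue by contradiction: suppose for some index $q$ there exist vertices $x_{k_i s_i}\sim x_{qi}$ ($i=1,\ldots,r$) whose collection $I=\{x_{k_1 s_1},\ldots,x_{k_r s_r}\}$ is independent. I extend $I$ to a maximal independent set $M$. Because each $x_{qi}$ has a neighbor $x_{k_i s_i}\in I\subseteq M$, none of $x_{q1},\ldots,x_{qr}$ can belong to $M$; thus the clique $C_q$ contributes zero vertices to $M$ and each of the other $n-1$ cliques contributes at most one, giving $|M|\le n-1$. This contradicts Lemma 2.2 which, via complementation of minimal vertex covers, says that every maximal independent set of an unmixed such $G$ has exactly $n$ elements.

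For the backward direction, I assume the condition and take any maximal independent set $M$. The clique bound gives $|M|\le n$, so it suffices to prove $|M|\ge n$, i.e.\ that $M$ meets every $C_j$. If some clique $C_q$ were disjoint from $M$, then maximality would supply, for each $i$, a neighbor $y_i\in M\cap N(x_{qi})$; writing $y_i=x_{k_i s_i}$, the resulting collection $\{y_1,\ldots,y_r\}$ sits inside the independent set $M$. The hypothesis asserts that this collection is not independent — contradiction. Hence every maximal independent set has size exactly $n$, so $G$ is well-covered and therefore unmixed.

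The one delicate point, and the main technical obstacle in writing up a clean proof, is that the vertices $x_{k_i s_i}$ (respectively $y_i$) need not be distinct, so the ``set'' may collapse to fewer than $r$ vertices, possibly to a singleton. In the forward direction this causes no trouble because every subcollection of an independent set is again independent. In the backward direction a singleton is automatically independent, so the hypothesis implicitly forbids the existence of a single vertex adjacent to every $x_{qi}$ in $C_q$; this reading of ``set'' must be kept consistent with the statement throughout. Once this convention is fixed, the clique-counting argument above goes through cleanly in both directions.
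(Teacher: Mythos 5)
Your proof is correct and follows essentially the same route as the paper's: the paper argues that every minimal vertex cover meets each clique $C_q$ in exactly $r-1$ vertices, while you argue with the complementary maximal independent sets meeting each $C_q$ in exactly one vertex, invoking the same Lemma 2.2 in the forward direction and the same maximality/minimality contradiction in the backward direction. Your explicit treatment of the possible collapse of $\{x_{k_1s_1},\ldots,x_{k_rs_r}\}$ to fewer than $r$ vertices is a nice touch; the paper handles the same point by observing that $s_i\neq s_j$ for some $i,j$, so the set has at least two elements.
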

\begin{proof}
Let $G$ be an arbitrary $r$-partite graph which satisfies the
condition $(\ast )$ for $r$.

Let $G$ be unmixed. We prove that mentioned condition holds. Assume the contrary.
Let
\[x_{k_{1}s_{1}}\thicksim x_{q1}, \ldots , x_{k_{r}s_{r}}\thicksim x_{qr},\]
but the set $\{x_{k_{1}s_{1}}, \ldots , x_{k_{r}s_{r}}\}$ is independent.
 Then there is a maximal independent set $M$, such that
$M$ contains this set. Since $M$ is maximal, $C=V(G)\backslash M$
is a minimal vertex cover of $G$. Since the set $\{x_{k_{1}s_{1}}, \ldots , x_{k_{r}s_{r}}\}$
is contained in $M$, then its elements are not in $C$, and since
$C$ is a cover of $G$, then all vertices $x_{qi}$, $(1\leq i\leq r)$ are in $C$. But
by Lemma 3.2, every minimal vertex cover, contains $n-1$ vertices
of clique $q$ th, a contradiction.

Conversely let the condition hold. We have to prove that $G$ is
unmixed. We show that all minimal vertex covers of $G$, intersect the set
$\{x_{q1}, \ldots, x_{qr}\}$ in exactly $r-1$ elements (for every
$1\leq q\leq n$). Let $C$ be a minimal vertex cover and $q$ be
arbitrary. Since $C$ is a vertex cover and $\{x_{q1}, \ldots,
x_{qr}\}$ is a clique, then $C$ intersects this set at least in
$r-1$ elements. Let the contrary. Let the cardinality of $C\cap
\{x_{q1}, \ldots, x_{qr}\}$ be $r$. Attending to minimality of
$C$, for every $1\leq i\leq r, N(x_{qi})$ contains at least one
element, distinct from the elements of $\{x_{q1}, \ldots,
x_{qr}\}\backslash\{x_{qi}\}$, which is not in $C$, because we
can not remove $x_{qi}$ of cover. Let this element be
$x_{k_{i}s_{i}}$ where $s_{i}\neq i$ and $k_{i}\neq q$. Then
$x_{k_{i}s_{i}}\notin C$ and $\{x_{k_{i}s_{i}}, x_{qi},\}$ is
in $E(G)$. There is at least two elements $i$ and $j$ such that $1\leq i< j\leq r$ and $s_{i}\neq s_{j}$, because $x_{qi}$ can not choose its adjacent vertex from the part $i$. Therefore the set $\{x_{k_{1}s_{1}}, \ldots , x_{k_{r}s_{r}}\}$ contain at least two elements. Then by hypothesis, at least two elements, say $a, b$ of $\{x_{k_{1}s_{1}}, \ldots , x_{k_{r}s_{r}}\}$ are adjacent by an edge. Now $C$ is a cover but $a, b$ are not in $C$, a contradiction.
\end{proof}

\begin{rem}
Villareal's theorem (Theorem 1.2) for bipartite graphs, and Haghighi's theorem (Theorem 1.3) for tripartite graphs, are  special cases of Theorem 2.3 (where $r=2$, and $r=3$).
\end{rem}

\section{\bf {\bf \em{\bf Examples and counterexamples}}}
\vskip 0.4 true cm

In this section, we give examples of two classes of unmixed graphs, and an example which shows that it is not necessary that an unmixed $r$-partite graph satisfies condition $(\ast )$.

\begin{example}
By Theorem 2.3, the following 4-partite graphs are unmixed.

\definecolor{qqqqff}{rgb}{0.,0.,1.}
\begin{center}
\begin{tikzpicture}[line cap=round,line join=round,>=triangle 45,x=1.0cm,y=1.0cm]
\clip(-3.,1.) rectangle (7.,5.8);
\draw (5.86,4.26)-- (3.26,4.26);
\draw (3.26,4.26)-- (3.26,1.86);
\draw (5.86,4.26)-- (5.9,1.84);
\draw (5.9,1.84)-- (3.26,1.86);
\draw (3.92,3.68)-- (5.22,3.68);
\draw (5.22,3.68)-- (5.22,2.44);
\draw (5.22,2.44)-- (3.88,2.46);
\draw (3.92,3.68)-- (3.88,2.46);
\draw (5.22,2.44)-- (5.9,1.84);
\draw (5.22,3.68)-- (5.86,4.26);
\draw (3.92,3.68)-- (3.26,4.26);
\draw (3.88,2.46)-- (3.26,1.86);
\draw (3.92,3.68)-- (5.22,2.44);
\draw (5.22,3.68)-- (3.88,2.46);
\draw [shift={(4.2284210526315675,3.4192105263158026)}] plot[domain=0.4514648196483709:4.156601207784886,variable=\t]({1.*1.835477267784604*cos(\t r)+0.*1.835477267784604*sin(\t r)},{0.*1.835477267784604*cos(\t r)+1.*1.835477267784604*sin(\t r)});
\draw [shift={(4.91,3.41)}] plot[domain=-1.0081993171568522:2.6899926544431825,variable=\t]({1.*1.8560711193270627*cos(\t r)+0.*1.8560711193270627*sin(\t r)},{0.*1.8560711193270627*cos(\t r)+1.*1.8560711193270627*sin(\t r)});
\draw (0.6,1.9)-- (0.6,4.18);
\draw (0.6,4.18)-- (-1.96,4.16);
\draw (-1.96,4.16)-- (-1.94,1.88);
\draw (0.6,1.9)-- (-1.94,1.88);
\draw (-1.34,3.54)-- (0.02,3.56);
\draw (0.02,3.56)-- (0.02,2.42);
\draw (-1.34,3.54)-- (-1.32,2.44);
\draw (-1.32,2.44)-- (0.02,2.42);
\draw (0.02,2.42)-- (0.6,1.9);
\draw (0.02,3.56)-- (0.6,4.18);
\draw (-1.34,3.54)-- (-1.96,4.16);
\draw (-1.32,2.44)-- (-1.94,1.88);
\draw (0.6,1.9)-- (0.02,3.56);
\draw (0.6,4.18)-- (-1.34,3.54);
\draw (-1.96,4.16)-- (-1.32,2.44);
\draw (-1.94,1.88)-- (0.02,2.42);
\draw (-1.34,3.54)-- (0.02,2.42);
\draw (0.02,3.56)-- (-1.32,2.44);
\draw [shift={(-0.3952032520325195,3.35260162601626)}] plot[domain=-0.9701283912621932:2.6906444134786236,variable=\t]({1.*1.7608182747692012*cos(\t r)+0.*1.7608182747692012*sin(\t r)},{0.*1.7608182747692012*cos(\t r)+1.*1.7608182747692012*sin(\t r)});
\draw [shift={(-0.9754200542005421,3.367289972899729)}] plot[domain=0.4510333247708006:4.137034697971628,variable=\t]({1.*1.7726945408971515*cos(\t r)+0.*1.7726945408971515*sin(\t r)},{0.*1.7726945408971515*cos(\t r)+1.*1.7726945408971515*sin(\t r)});
\draw (-2.5,4.5) node[anchor=north west] {$t_{2}$};
\draw (-2.52,2.03) node[anchor=north west] {$x_{2}$};
\draw (0.56,2.04) node[anchor=north west] {$y_{2}$};
\draw (0.52,4.5) node[anchor=north west] {$z_{2}$};
\draw (-1.87,2.78) node[anchor=north west] {$y_{1}$};
\draw (-1.6,4.03) node[anchor=north west] {$x_{1}$};
\draw (-0.,3.8) node[anchor=north west] {$t_{1}$};
\draw (-0.3,2.42) node[anchor=north west] {$z_{1}$};
\draw (2.69,2.00) node[anchor=north west] {$x_{2}$};
\draw (5.84,1.98) node[anchor=north west] {$y_{2}$};
\draw (2.72,4.68) node[anchor=north west] {$t_{2}$};
\draw (5.8,4.62) node[anchor=north west] {$z_{2}$};
\draw (3.38,3.82) node[anchor=north west] {$x_{1}$};
\draw (3.38,2.87) node[anchor=north west] {$y_{1}$};
\draw (5.17,2.8) node[anchor=north west] {$z_{1}$};
\draw (5.18,3.82) node[anchor=north west] {$t_{1}$};
\begin{scriptsize}
\draw [fill=qqqqff] (5.86,4.26) circle (1.5pt);
\draw [fill=qqqqff] (3.26,4.26) circle (1.5pt);
\draw [fill=qqqqff] (3.26,1.86) circle (1.5pt);
\draw [fill=qqqqff] (5.9,1.84) circle (1.5pt);
\draw [fill=qqqqff] (3.92,3.68) circle (1.5pt);
\draw [fill=qqqqff] (5.22,3.68) circle (1.5pt);
\draw [fill=qqqqff] (5.22,2.44) circle (1.5pt);
\draw [fill=qqqqff] (3.88,2.46) circle (1.5pt);
\draw [fill=qqqqff] (5.88,4.22) circle (1.5pt);
\draw [fill=qqqqff] (3.24,4.22) circle (1.5pt);
\draw [fill=qqqqff] (0.6,1.9) circle (1.5pt);
\draw [fill=qqqqff] (0.6,4.18) circle (1.5pt);
\draw [fill=qqqqff] (-1.96,4.16) circle (1.5pt);
\draw [fill=qqqqff] (-1.94,1.88) circle (1.5pt);
\draw [fill=qqqqff] (-1.34,3.54) circle (1.5pt);
\draw [fill=qqqqff] (0.02,3.56) circle (1.5pt);
\draw [fill=qqqqff] (0.02,2.42) circle (1.5pt);
\draw [fill=qqqqff] (-1.32,2.44) circle (1.5pt);
\draw [fill=qqqqff] (-1.98,4.12) circle (1.5pt);
\draw [fill=qqqqff] (0.62,4.14) circle (1.5pt);
\end{scriptsize}
 \end{tikzpicture}
 \end{center}
 \label{graph 1}

\end{example}

In each of the above graphs, there are two complete graphs of order 4 and some edges between them.

For $r>4$, also $r=3$, using two complete graphs of order $r$, we can construct $r$-partite unmixed graphs which are natural generalization of the above graphs.

\smallskip

\begin{example}
For every $n$, $n\geq 3$, the complete graph $K_{n}$, is an $n$-partite graph which satisfies the condition $(\ast )$. By Theorem 2.3, $K_{n}$ is unmixed.
\end{example}

\smallskip

Theorem 2.3 dose not characterize all unmixed $r$-partite graphs.
More precisely, the condition $(\ast )$ is not valid for all unmixed graphs. In the following, we give an example of an
unmixed $r$-partite graph which dose not satisfy the condition
$(\ast )$.

\begin{example}
The following graph is a 4-partite graph with partition
$\{y_{1}\}$, $\{y_{2}, y_{4}\}$, $\{y_{3}\}$, and $\{y_{5},
y_{6}\}$. This graph dose not satisfy the condition $(\ast )$
because 6 is not a multiple of 4.

\definecolor{qqqqff}{rgb}{0.,0.,1.}
\begin{center}
\begin{tikzpicture}[line cap=round,line join=round,>=triangle 45,x=1.0cm,y=1.0cm]
\clip(1.3,0.7) rectangle (5.6,5.);
\draw (4.46,3.88)-- (2.66,3.88);
\draw (4.46,3.88)-- (5.06,2.34);
\draw (5.06,2.34)-- (3.72,1.36);
\draw (2.66,3.88)-- (2.2,2.2);
\draw (3.72,1.36)-- (2.2,2.2);
\draw (2.2,2.2)-- (4.46,3.88);
\draw (2.2,2.2)-- (5.06,2.34);
\draw (3.6,2.72)-- (4.46,3.88);
\draw (3.6,2.72)-- (5.06,2.34);
\draw (3.6,2.72)-- (2.66,3.88);
\draw (1.66,2.48) node[anchor=north west] {$y_{1}$};
\draw (2.30,4.4) node[anchor=north west] {$y_{2}$};
\draw (4.34,4.4) node[anchor=north west] {$y_{3}$};
\draw (5.10,2.62) node[anchor=north west] {$y_{4}$};
\draw (3.58,1.35) node[anchor=north west] {$y_{5}$};
\draw (3.20,2.74) node[anchor=north west] {$y_{6}$};
\begin{scriptsize}
\draw [fill=qqqqff] (2.66,3.88) circle (1.5pt);
\draw [fill=qqqqff] (4.46,3.88) circle (1.5pt);
\draw [fill=qqqqff] (5.06,2.34) circle (1.5pt);
\draw [fill=qqqqff] (3.72,1.36) circle (1.5pt);
\draw [fill=qqqqff] (2.2,2.2) circle (1.5pt);
\draw [fill=qqqqff] (3.6,2.72) circle (1.5pt);
\end{scriptsize}
 \end{tikzpicture}
 \end{center}
 \label{graph 1}

We show that this graph is unmixed. Let $C$ be an arbitrary
minimal vertex cover of $G$. We show that $C$ is of size 4.

Since $C$ is a cover, it selects at least one element of
$\{y_{4},y_{6}\}$. Now we consider the following cases:
\\\textbf{case 1:} $y_{6}\in C$ and $y_{4}\notin C$. In this case,
since $C$ is a vertex cover, $y_{1}, y_{3}, y_{5}\in C$. Now
$\{y_{1}, y_{3}, y_{5}, y_{6}\}$ is a vertex cover of $G$, and
since $C$ is minimal, $C=\{y_{1}, y_{3}, y_{5}, y_{6}\}$.
\\\textbf{case 2:} $y_{4}\in C$ and $y_{6}\notin C$. In this case, $y_{2},y_{3}\in C$, and  at least one
vertex of $y_{1},y_{5}$ and by minimality, only one is in $C$.
Now since $\{y_{2}, y_{3}, y_{4}, y_{i}\}$ where $i\in\{1, 5\}$
is one of two vertices $y_{1}$ and $y_{5}$, is a cover of $G$, by
minimality of $C$, $C=\{y_{2}, y_{3}, y_{4}, y_{i}\}$.
\\\textbf{case 3:} $y_{4}, y_{6}\in C$. In this case, at least one
of two vertices $y_{1}, y_{5}$ and by minimality of $C$, only one
is in $C$. Now if $y_{5}\in C$, $y_{3}$ should be in $C$ (because
the edge $\{y_{1}, y_{3}\}$ should be covered). Also $y_{2}\in C$
 (because the edge $\{y_{1}, y_{2}\}$ should be covered). Now
$\{y_{2}, y_{3}, y_{5}, y_{4}, y_{6}\}$ is a cover, and since $C$
is minimal, $C= \{y_{2}, y_{3}, y_{5}, y_{4}, y_{6}\}$, that is a
contradiction because $y_{6}$ can be removed. If $y_{1}\in C$, at
least one of $y_{2}$ and $y_{3}$, and by minimality only one, is
in $C$. Now since $\{y_{1}, y_{4}, y_{6}, y_{j}\}$, where
$j\in\{2, 3\}$ is one of two vertices $y_{2}$ and $y_{3}$, is a
vertex cover, by minimality of $C$, $C=\{y_{1}, y_{4}, y_{6},
y_{j}\}$.
\end{example}


\vskip 0.4 true cm




\end{document}